\documentclass{article}

\usepackage{comment}
\usepackage{amsmath}       %   AMS equation environments
\usepackage{amssymb}       %   AMS symbol fonts (e.g., \boldsymbol{}
\usepackage{amsthm}

\usepackage{chapterbib}    %   Bibtex
\usepackage{color}
\usepackage{comment}
\usepackage{bm,bbm}
\usepackage{overpic}

\usepackage{times}

\usepackage{epsfig}
\usepackage{graphicx,graphics,rotating}
\usepackage{subfigure}
\usepackage{multicol}
\usepackage{multirow}

%%%%%%%%%%%%%%% colors
%%\definecolor{MyDarkGreen}{rgb}{0.25,0.75,0.25} 
\definecolor{MyDarkGreen}{rgb}{0,0.45,0}%%{0.80,0.20,0.20} 

%\newtheorem{assumption}{Assumption}[section]
%\newtheorem{theorem}{Theorem}[section]
%%\newtheorem{definition}{Definition}[section]
%%\newtheorem{lemma}{Lemma}[section]
%\newtheorem{remark}{Remark}[section]
%\newtheorem{corrollary}{Corollary}[section]

% defs from newtheorem (comments for beamer style)
%\newtheorem{proposition}[theorem]{Proposition}
%\newtheorem{assumption}[theorem]{Assumption}
%\newtheorem{remark}{Remark}[section]
%\newtheorem{example}{Example}[section]
%\newtheorem{problem}{Problem}[section]

%% end of modifs required by beamer...

%% ------------------------------------------------------------------------------------------
%% 
%% MACROS useful for the manuscript's redaction
%% 
%% ------------------------------------------------------------------------------------------

%% le petit carr{\'e} de fin de d{\'e}monstration
\def\trait #1 #2 #3 {\vrule width #1pt height #2pt depth #3pt}
\def\fin{\hfill
        \trait .3 5 0
        \trait 5 .3 0
        \kern-5pt
        \trait 5 5 -4.7
        \trait 0.3 5 0
\medskip}

%% ------------------------------------------------------------------------------------------
%% 
%% ACRONYMS
%% 
%% ------------------------------------------------------------------------------------------

%% ------------------------------------------------------------------------------------------
%% 
%% AUXILIARY
%% 
%% ------------------------------------------------------------------------------------------

%% interpolations
\newcommand{\Pj}{\Pi_{\hh,\P}}

%% projection operator used in the HIGH-order flux MFD formulation

%% restrict
%\newcommand{\restrict}[2]{{#1}_{|{#2}}}
%\newcommand{\Restrict}[2]{\big(#1\big)_{|{#2}}}
%\newcommand{\RESTRICT}[2]{\Big(#1\big)_{|{#2}}}

%% paragraph
%%\newcommand{\PARAGRAPH}[1]{\paragraph{\textbf{#1}}}
%%\newcommand{\PGRAPH}[1]{~\\\noindent\textbf{#1.}}

%% other symbols

 %% jump in norm expressions

%% lifting operators

%% lifting operators

%% symbols for sets

\newcommand{\REAL}{\mathbbm{R}}

%% matrix symbol

%% ------------------------------------------------------------------------------------------
%% 
%% CONTINUOUS SETTING
%% 
%% ------------------------------------------------------------------------------------------

%% coordinates
\newcommand{\X}{\mathbf{x}}

% functional spaces
\newcommand{\TEXTFONTD}   [1]{#1}

\newcommand{\HONE}      {\TEXTFONTD{H}^1}
\newcommand{\HONEzr}    {\TEXTFONTD{H}^1_0}
\newcommand{\HTWO}      {\TEXTFONTD{H}^2}

\newcommand{\LTWO}      {\TEXTFONTD{L}^2}

\newcommand{\HS}[1]     {\TEXTFONTD{H}^{#1}}
\newcommand{\CS}[1]     {\TEXTFONTD{C}^{#1}}
\newcommand{\PS}[1]     {\TEXTFONTD{\mathbbm{P}}_{#1}}

%%\newcommand{\Pz}        {\TEXTFONTD{\mathbbm{P}}_{0}}
%%\newcommand{\THONE}     {\TEXTFONTD{H}^1}

%% ------------------------------------------------------------------------------------------
%%
%% MESH ITEMS
%%
%% ------------------------------------------------------------------------------------------

%% geometric items
\newcommand{\TEXTFONTA}[1]{\mathsf{#1}}

\renewcommand{\P} {\TEXTFONTA{P}}           % polyhedral element
           % polygonal  face
\newcommand{\E} {\TEXTFONTA{e}}           % edge
\newcommand  {\V} {\TEXTFONTA{v}}           % vertex
           % tetraedra/triangle
           % generic symbol
            % formulation of Stokes equations

%% modified geometric symbols
% vertices

% edges

% faces

% polyhedrons

%% signs for edge's and face's orientation

%% the mesh & mesh size
\newcommand{\hh}{h}
\newcommand{\Tmesh}{\Omega_{\hh}}

%% mesh items
\newcommand{\TEXTFONTB}[1]{\mathcal{#1}}
\newcommand{\Pset}{\TEXTFONTB{P}}          % mesh polyhedrons
          % mesh faces
          % mesh edges
\newcommand{\Vset}{\TEXTFONTB{V}}          % mesh nodes
\newcommand{\hP}{\hh_{\P}}

%% measures
\newcommand{\mP}{\ABS{\P}}

\newcommand{\mE}{\ABS{\E}}

%% position vectors

%% cardinalities,
\newcommand{\NMB}{N}
\newcommand{\NP}{\NMB^{\Pset}}   % cardinality of the set of polyhedrons
   % cardinality of the set of faces
   % cardinality of the set of edges
   % cardinality of the set of vertices

%% cardinalities,
\newcommand{\NPV}{\NMB^{\Vset}_{\P}}      % number of vertices of polyhedron P
      % number of edges    of polyhedron P
      % number of faces    of polyhedron P
      % number of vertices of face F
      % number of edges    of face F

%% generic CARD command
 % cardinality of set argument #1

%% ------------------------------------------------------------------------------------------
%%
%% OPERATORS
%%
%% ------------------------------------------------------------------------------------------

%% measures

\newcommand{\dV}   {\,dV}
\newcommand{\dS}   {\,dS}

%% differential operators (continuous)
\newcommand{\GRAD} {\nabla}
        %% BE CAREFUL, these three are 
         %% BE CAREFUL, these three are 
 %% DIFFERENT operators!
 %% DIFFERENT operators!

%% differential operators (discrete)

%%      
% \newcommand{\GRADE}{\GRAD_{\E}}
% \newcommand{\CURLF}{\CURL_{\F}}
% \newcommand{\DIVP} {\DIV_{\P}}

%% DUAL differential operators (continuous)

        %% BE CAREFUL, these three are 

%% DUAL differential operators (discrete)

%%      

%% normal, tangential
\newcommand{\nor}  {\mathbf{n}}

\newcommand{\norP} {\mathbf{n}_{\P}}

 %% used by plates, chapter 12

%% ------------------------------------------------------------------------------------------
%%
%% FIELDS
%%
%% ------------------------------------------------------------------------------------------

%% scalar, p

%% scalar, q
\newcommand{\qs}{q}

%% scalar, u
\newcommand{\us}{u}
\newcommand{\ush}{\us_{\hh}}

%% scalar, v
\newcommand{\vs}{v}
\newcommand{\vsh}{\vs_{\hh}}

%% scalar, w
\newcommand{\ws}{w}
\newcommand{\wsh}{\ws_{\hh}}

%% scalar, F
\newcommand{\Fs} {F}
\newcommand{\Fsh}{\Fs_{\hh}} % \Fh already defined as spaces of face functions!

 % used in chapter 5
 % used in chapter 5

%% scalar, G

 % used in chapter 5g

%% vector, u

%% vector, v
\newcommand{\vv} {\mathbf{v}}

%% vector, w

%% vector, p

%% vector, q

%% vector, F

%% vector, G

%% vector, H

%% vector, J

%% AUXILIARY SYMBOLS

%% vectors, misc, c, f , g

%% scalars, misc, c, f, g

\newcommand{\fs} {f}
\newcommand{\gs} {g}

%% added, scalars p & q

%% added, vectors u & v

                        % nodal  interpolant
                       % full   interpolant
                      % face interpolation
                      % scalar.. used in face interpolation
                     % clement
               % clement + nodal interpolation
  % clement + face  interpolation

    % modified <------------------------

%% ------------------------------------------------------------------------------------------
%%
%% SCALAR PRODUCTS, NORMS, SEMIN-NORMS
%%
%% ------------------------------------------------------------------------------------------

%% scalar products for vectors

%% big
\newcommand{\scal}   [2]{\big(#1,#2\big)}

%% Big

%% left/right

%% bilinear forms

\newcommand{\bilA}   [2]{\mathcal{A}\big(#1,#2\big)}
\newcommand{\bilAP}  [2]{\mathcal{A}_{\P}\big(#1,#2\big)}

\newcommand{\bilSP}  [2]{\mathcal{S}_{\hh,\P}\big(#1,#2\big)}

\newcommand{\bilAh}  [2]{\mathcal{A}_{\hh}\big(#1,#2\big)}

\newcommand{\bilAhP} [2]{\mathcal{A}_{\hh,\P}\big(#1,#2\big)}

%% ---

%% ---

%% norms and seminorms
 % Lorenzo's seminorm

%{\left|\nlen\left|\hskp #1\hskp\right|\nlen\right|_{#2}}

%{\left|\nlen\left|\nlen\left|\hskp #1\hskp\right|\nlen\right|\nlen\right|_{#2}}

%% abs, snorm, norm
\newcommand{\abs}    [1]{\big|#1\big|}

\newcommand{\ABS}    [1]{\left|#1\right|}
%---

%--- 

%%
%% SYMBOLS USED IN SINGLE CHAPTERS
%%
%% From chapter 7
%% vertices
 % a vertex chosen from face F_i

%% restrictions of Smesh

% assumptions and constants
  % assumption
  % assumption

%% constants of the analysis

    % Agmon's inequality

%% regularity constants

 % uniform upper bound on local face cardinality
 % uniform upper bound on local edge cardinality
 % uniform upper bound on local vrtx cardinality

%%% From chapter 5 (4)

%% regularity of tetrahedral sub-partition

%% local interpolations on P, auxiliary scalars
% 

% p

 % p, discrete
       % p, continuous

 % obsolete to be removed!!!

% diffusive tensor
     % diffusive tensor
        % diffusive tensor
  % diffusive tensor
  % coercivity, lower diffusive bound
  % coercivity, upper diffusive bound
  % coercivity, upper diffusive bound
 % approximation constant

% Dirichlet boundary conditions
\newcommand{\gD}{\gs}

%% Used in CHAPTERS 4, 5, 12 (mixed low/high order formulation)

%% absolute position vectors

%{\BOLDSYMBOL{\xi}}
%{\BOLDSYMBOL{\xi}_{\e}}

%% (local) linear interpolation (MFD formulation with high order fluxes)

%% post-processed
           % post-processed p, continuous
     % post-processed p, discrete
  % constant with up *
  % constant with down *

%%%% chapter 6 (nodal diffusion, some also used by nodal Stokes, chapter 7)
%% quadrature rule for faces

%% vertex positions

%\newcommand{\THONE}{\EuScript{H}^1}

%% new constants

%% greek symbols in bold fonts

%% (also used in preliminaries of chap.6-7 (from the appendix of the Stokes paper))

%% vertices
     % a vertex chosen from face F

%% ADDED, SCALAR INTERPOLANTS

                        % nodal  interpolant
                        % nodal  interpolant
%

%%%% chapter 8

\newcounter{cst}

%% mesh

%% schemes

%% functional spaces

%% discrete spaces

%% fields

% hat

% tilde

%% cell-based gradient
\newcommand{\vh}{\vv_{\hh}}

%% geometric quantities

% --cells

% --edges

% --other stuff

%% normals

%% mesh size

%% velocity field

%% misc

%%\newcommand{\matI}{I}

%%\newcommand{\LE}{\Lambda_{\P}}

%% mesh-dependent norm

%% convective functions, bilinear form

%% scalar product

%% r.h.s source terms

%% misc

%% used in Chapter 12

%\newcommand{\zero}{0}

% \def\matM{\mathsf{M}}
% \def\matN{\mathsf{N}}
% \def\matR{\mathsf{R}}
% \def\matP{\mathsf{P}}
% \def\matK{\overline{\mathsf{K}}}
% \def\matC{\mathsf{C}}
% \def\matI{\mathsf{I}}
% \def\matU{\mathsf{U}}

% \def\vecN{\mathbf{N}}
% \def\vecR{\mathbf{R}}

%% -------------------------------------
%% -------------------------------------
%% taken from the beginning of the paper
%% -------------------------------------
%% -------------------------------------

\newcommand{\NDG}     {m}

%

%

%\newcommand{\bfun}[2]{\varphi_{#1,#2}}
%\newcommand{\cfun}[2]{\cs_{#1,#2}}

%% 

%%

%%

%%

%% scalar, e

%% ---------------------- 

%%

%% red/blue P

%% -----------------------------------------------------------------------------------------

\newcommand{\matB}{\mathsf{B}}

\newcommand{\matD}{\mathsf{D}}

\newcommand{\matG}{\mathsf{G}}
\newcommand{\matI}{\mathsf{I}}

\newcommand{\matM}{\mathsf{M}}

%% new matrices

\newcommand{\matGt}{\mathsf{\widetilde{G}}}

%% projector: Pi-nabla (also modified)
\newcommand{\Pn}  {\Pi^{\nabla}}
\newcommand{\Pnk} {\Pn_{\NDG}}

\newcommand{\matPn} {{\bm\Pi^{\nabla,\phi}_{\NDG}}}
\newcommand{\matPnk}{{\bm\Pi^\nabla_{\NDG}}}

%% projector: Pi-0

%% constant projector (used by Pi-nabla)

%% projector: Pi-0_{k-1}

\newcommand{\xP}{x_{\P}}
\newcommand{\yP}{y_{\P}}

%% ----------------------------------------------------------------------------------------------------------

%% vectors (in bold)

%\newcommand{\bphi}{\bm{\phi}}

%% position vectors

%% geometric symbols
                    % v_i
                    % v_j

%% the mesh & mesh size
\newcommand{\Th}{\Omega_{\hh}}

%% virtual element space
\newcommand{\Vh} {V_{\hh}}
\newcommand{\Vhg}{V_{\hh,g}}
\newcommand{\Vhz}{V_{\hh,0}}
\newcommand{\VhP}{V_{\hh,\P}}

%% differential operators

%\newcommand{\GRAD}{\nabla}
%\newcommand{\DIV} {\text{div}}

%% problem sizes
%\newcommand{\DIM}{d} % space dimension
%\newcommand{\NDG}{m} % indicate the number of something

%% basis functions

%% #of dofs

%% restrict
\newcommand{\restrict}[2]{{#1}{}_{|{#2}}}

%% q_{#1}
%\newcommand{\qs}{q}

%% construction of local shape functions

%% general commands
%% \newcommand{\ASSUM}[2]{(\textsf{#1#2})} % to list assumptions...

%% \newcommand{\HAT}  [1]{\widehat{#1}}
%% \newcommand{\BIB}  [1]{\textbf{[CITE~#1!]}}

\theoremstyle{plain}

\newtheorem{theorem}{Theorem}[section]

\theoremstyle{definition}

\theoremstyle{plain}

%% matrices

%\newcommand{\rsE}{\rs_{\E}}

%\newcommand{\vsb}{\overline{\vs}}

%% scalar, r

\setlength\oddsidemargin {0.8cm}
\setlength\evensidemargin{0.8cm}
\setlength\textwidth{150mm}

\newcommand{\EOD}{\end{document}}

%% WriteUps special commands

\newcounter{numbs}
\newcounter{numbi}
\newcounter{numbii}

\newcommand{\NewItem}{
  \stepcounter{numbi}
  \setcounter{numbii}{0}
  \bigskip\noindent
  \textbf{\thenumbs.\thenumbi.}
}

\newcommand{\SubItem}{
  \stepcounter{numbii}
  \medskip\noindent
  \textbf{\thenumbs.\thenumbi.\thenumbii.}
}

\newcommand{\SecItem}[1]
{
  \setcounter{numbi} {0}
  \stepcounter{numbs}
  \bigskip\medskip\noindent
  \begin{large}
  \textbf{\!\!\thenumbs.~~#1}
  \end{large}
}

\newcommand{\PGRAPH}[1]{\noindent\textbf{#1}}

\author{G. Manzini}
%\address{T5-Group, Theoretical Division, Los Alamos National Laboratpry, Los Alamos, New Mexico, USA}
\title{Annotations on the virtual element method\\ for second-order elliptic problems}
\date{}

\begin{document}

%% title, author, etc ---
\maketitle
%% 0---------------------

%% low-order conforming VEM
\SecItem{Introduction}

\NewItem
This document contains working annotations on the Virtual Element
Method (VEM) for the approximate solution of diffusion problems with
variable coefficients.
To read this document you are assumed to have familiarity with
concepts from the numerical discretization of Partial Differential
Equations (PDEs) and, in particular, the Finite Element Method (FEM).
This document is \emph{not} an introduction to the FEM, for which many
textbooks (also free on the internet) are available.
Eventually, this document is intended to evolve into a \emph{tutorial
  introduction} to the VEM (but this is really a long-term goal).

\SubItem
To ease the exposition, we will refer to the Laplace problem and
consider the conforming method for the primal form in two space
dimensions.

\NewItem
Acronyms used in this document:
\begin{itemize}
\item \textbf{FEM} = \textbf{F}inite  \textbf{E}lement \textbf{M}ethod
\vspace{-0.5\baselineskip}
\item \textbf{VEM} = \textbf{V}irtual \textbf{E}lement \textbf{M}ethod
\vspace{-0.5\baselineskip}
\item \textbf{MFD} = \textbf{M}imetic \textbf{F}inite \textbf{D}ifference method
\vspace{-0.5\baselineskip}
\item \textbf{PFEM} = \textbf{P}olygonal \textbf{F}inite \textbf{E}lement \textbf{M}ethod
\vspace{-0.5\baselineskip}
\item \textbf{PDE} = \textbf{P}artial \textbf{D}ifferential \textbf{E}quation
\end{itemize}

\NewItem
The Virtual Element Method is a kind of Finite Element Methods where
trial and test functions are the solutions of a PDE problem inside
each element.

\SubItem
A \emph{naive} approach to implement the VEM would consists in solving
each local PDE problem to compute the trial and test functions at
least approximately where needed (for example, at the quadrature nodes
of the element).

\NewItem
The key point in the VEM approach is that some elliptic and $\LTWO$
polynomial projections of functions and gradients of functions that
belong to the finite element space are computable exactly using only
their degrees of freedom.
So, the VEM strategy is to substitute test and trial functions in the
bilinear forms and linear functionals of the variational formulations
with their polynomial projections whenever these latters are
computable.
In fact, the value of such bilinear forms is exact whenever at least
one of the entries is a polynomial function of a given degree.
We refer to this exactness property as the \emph{polynomial
  consistency} or, simply, the \emph{consistency} of the method.
This strategy leads to an underdetermined formulation (the global
stiffness matrix is rank deficient).
In the VEM we fix this issue by adding a stabilization term to the
bilinear form.
Stabilization terms are required not to upset the exactness property
for polynomials and to scale properly with respect to the mesh size
and the problem coefficients.

\SubItem
The low-order setting for polygonal cells (in 2D) and polyhedral cells
(in 3D) can be obtained as a straightforward generalization of the
FEM.
The high-order setting deserves special care in the treatment of
variable coefficients in order not to loose the optimality of the
discretization and in the 3D formulation.

\NewItem
The shape functions are virtual in the sense that we never compute
them explicitly (not even approximately) inside the elements.
Note that the polynomial projection of the approximate solution is
readily available from the degrees of freedom and we can use it as 
the numerical solution. 

\NewItem
%% (rewrite)
The major advantage offered by the VEM is in the great flexibility in
admitting unstructured polygonal and polyhedral meshes.

\NewItem
The VEM can be seen as an evolution of the MFD method.
Background material (and a few historical notes) will be given in the
final section of this document.

\SecItem{Meshes}

\NewItem
\PGRAPH{Meshes with polygonal \& polyhedral cells.}

The VEM inherits the great flexibility of the MFD method and is
suitable to approximate PDE problems on unstructured meshes of cells
with very general geometric shapes.
The mesh flexibility is a major feature of this kind of
discretization, as we often need that computational meshes should be
easily adaptable to:
\begin{itemize}
\item the geometric characteristics of the domain;
\vspace{-0.5\baselineskip}
\item the solution.
\end{itemize}

\SubItem
Examples of 2D meshes used in academic problems are given in
Figure~\ref{fig:figure:mesh:2D}.
\begin{figure}[h]
  \begin{center}
    \begin{tabular}{c}
      \includegraphics[scale=0.125]{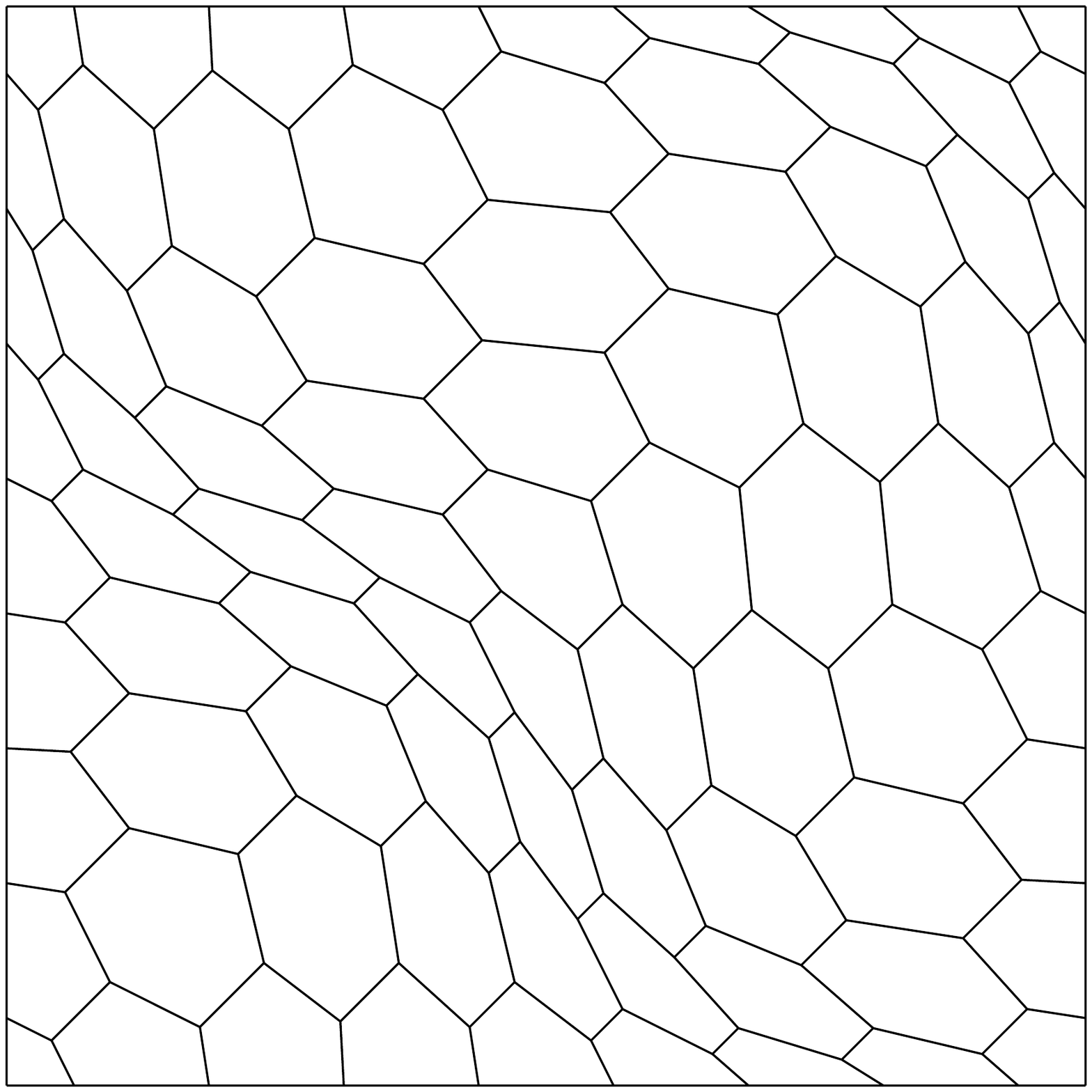}
      \includegraphics[scale=0.125]{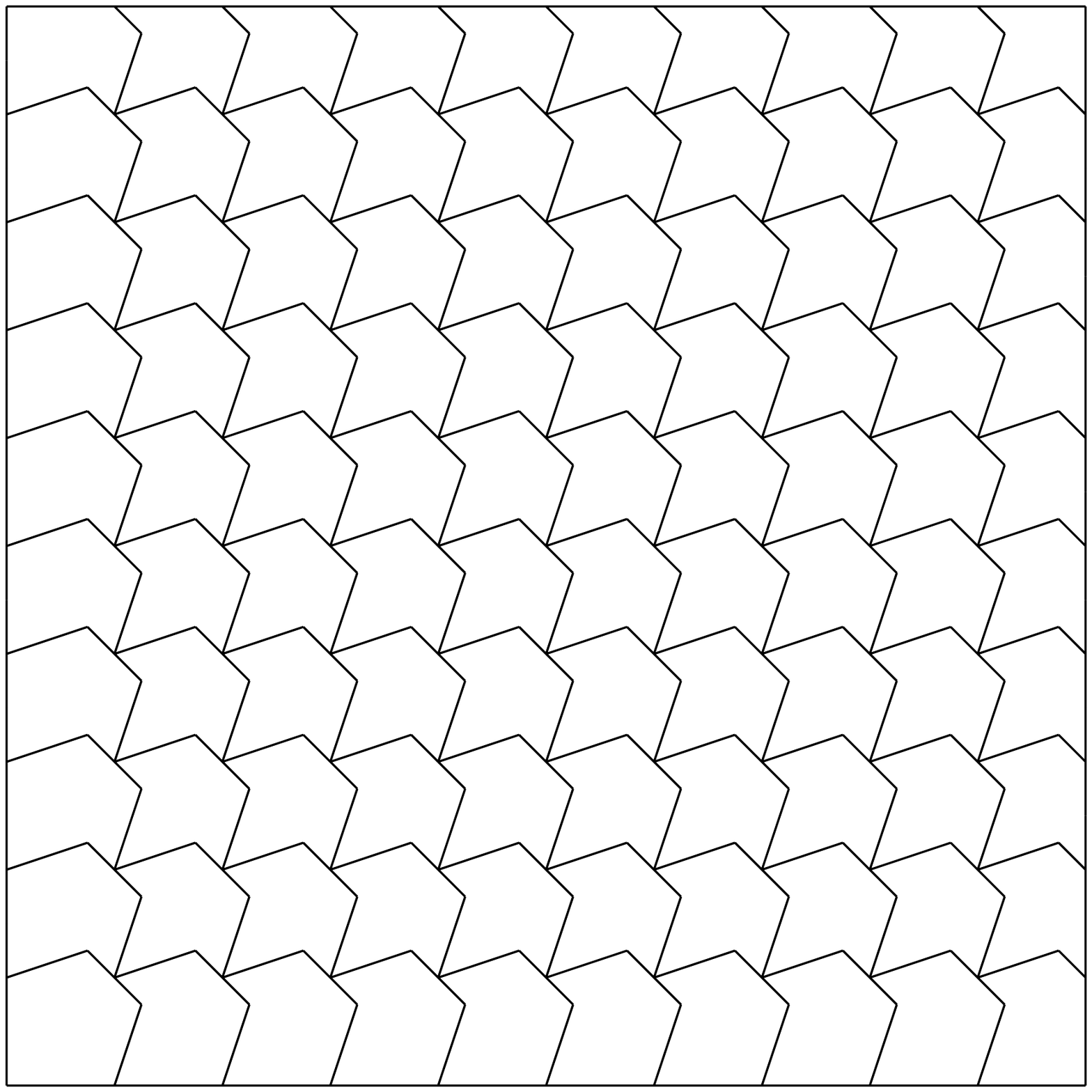}
      \includegraphics[scale=0.125]{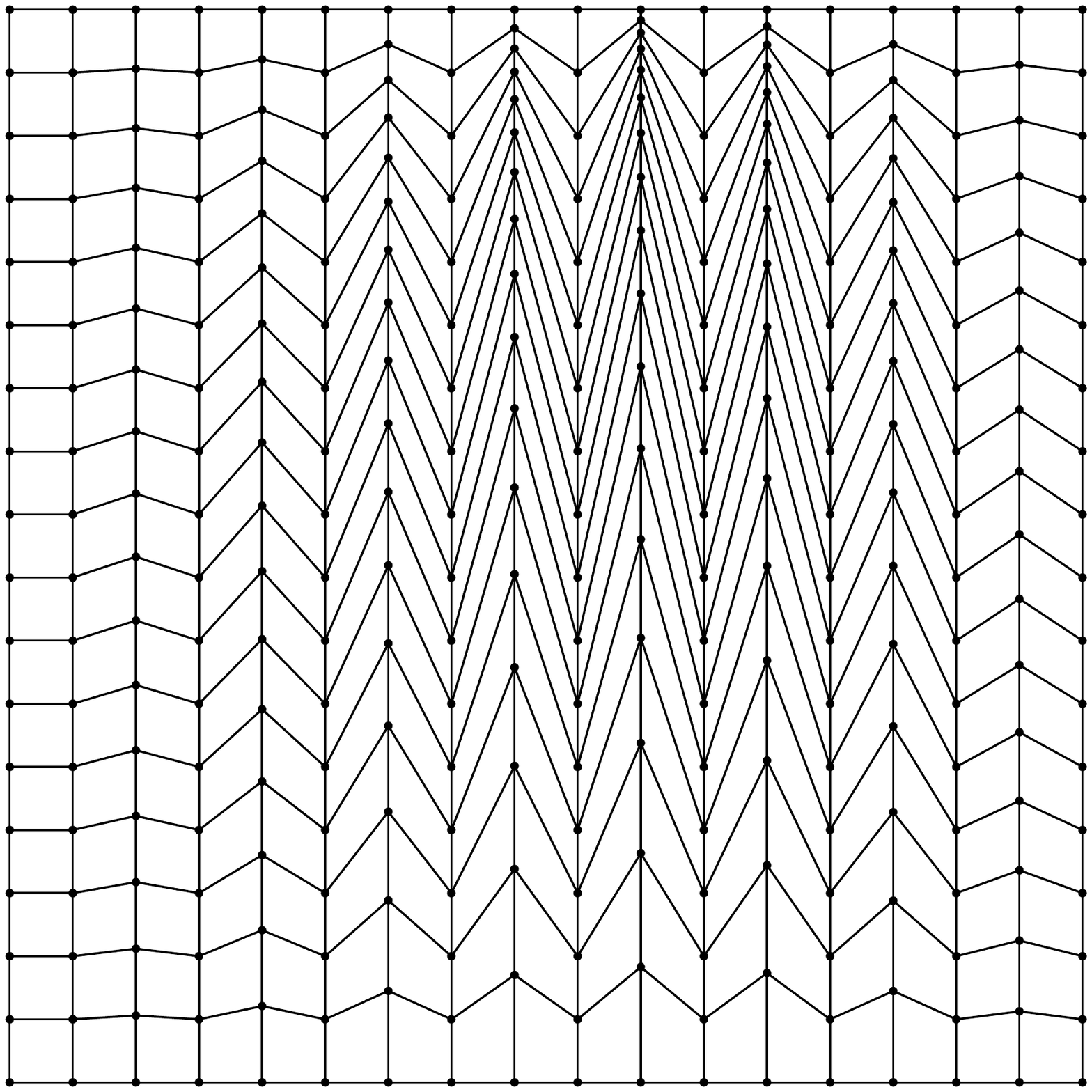}
      \includegraphics[scale=0.125]{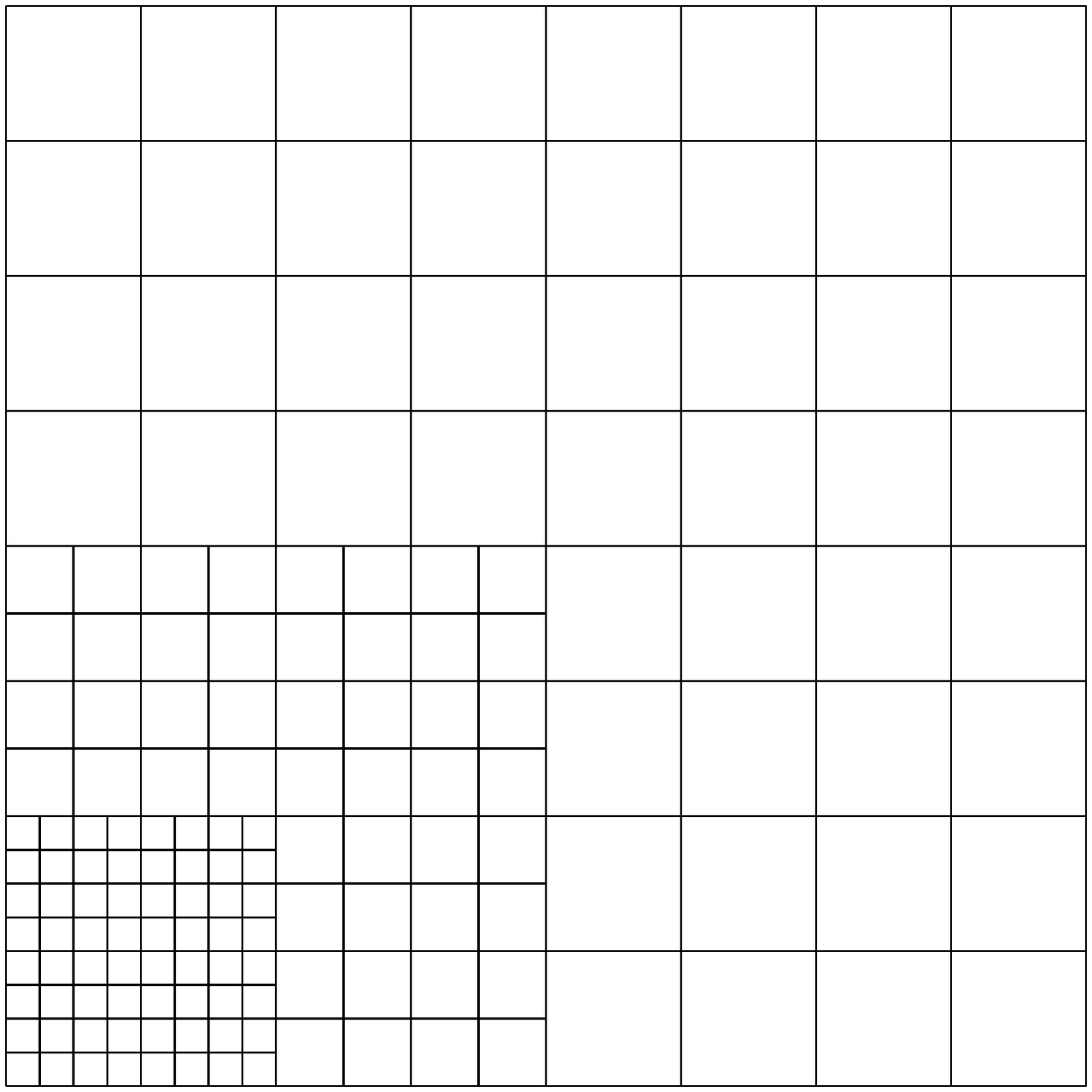}
    \end{tabular}
  \end{center}
  \vspace{-1.2cm}
  \caption{Convex and non-convex cells; highly deformed cells; local
    adaptive refinements (AMR, hanging nodes).}
  \label{fig:figure:mesh:2D}
\end{figure}

\SubItem
Examples of 3D meshes used in academic problems are given in
Figure~\ref{fig:figure:mesh:3D}.
\begin{figure}[h]
  \begin{center}
    \begin{tabular}{c}
      \hspace{-0.5cm}
      \includegraphics[scale=0.15]{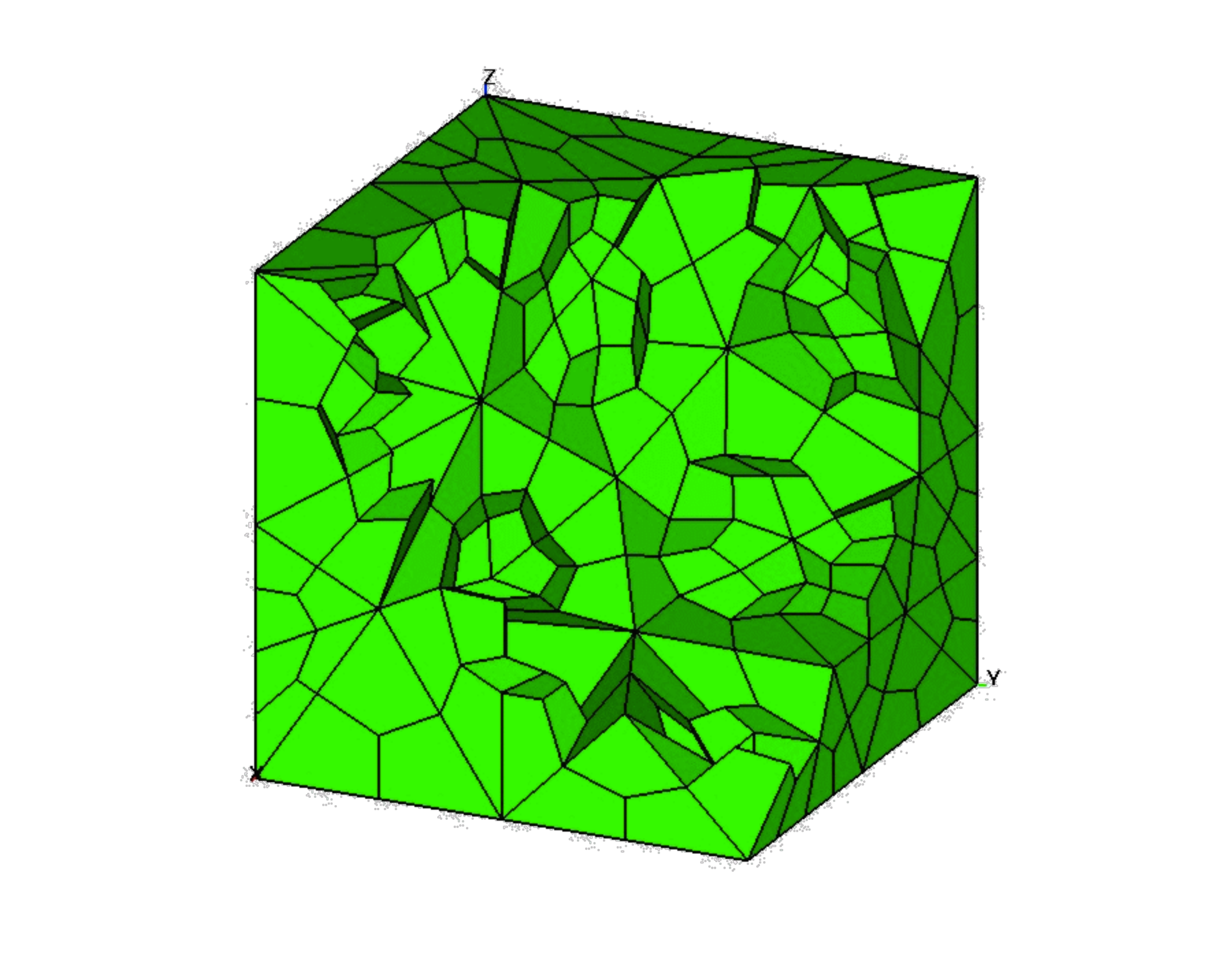}\hspace{-1cm}
      \includegraphics[scale=0.15]{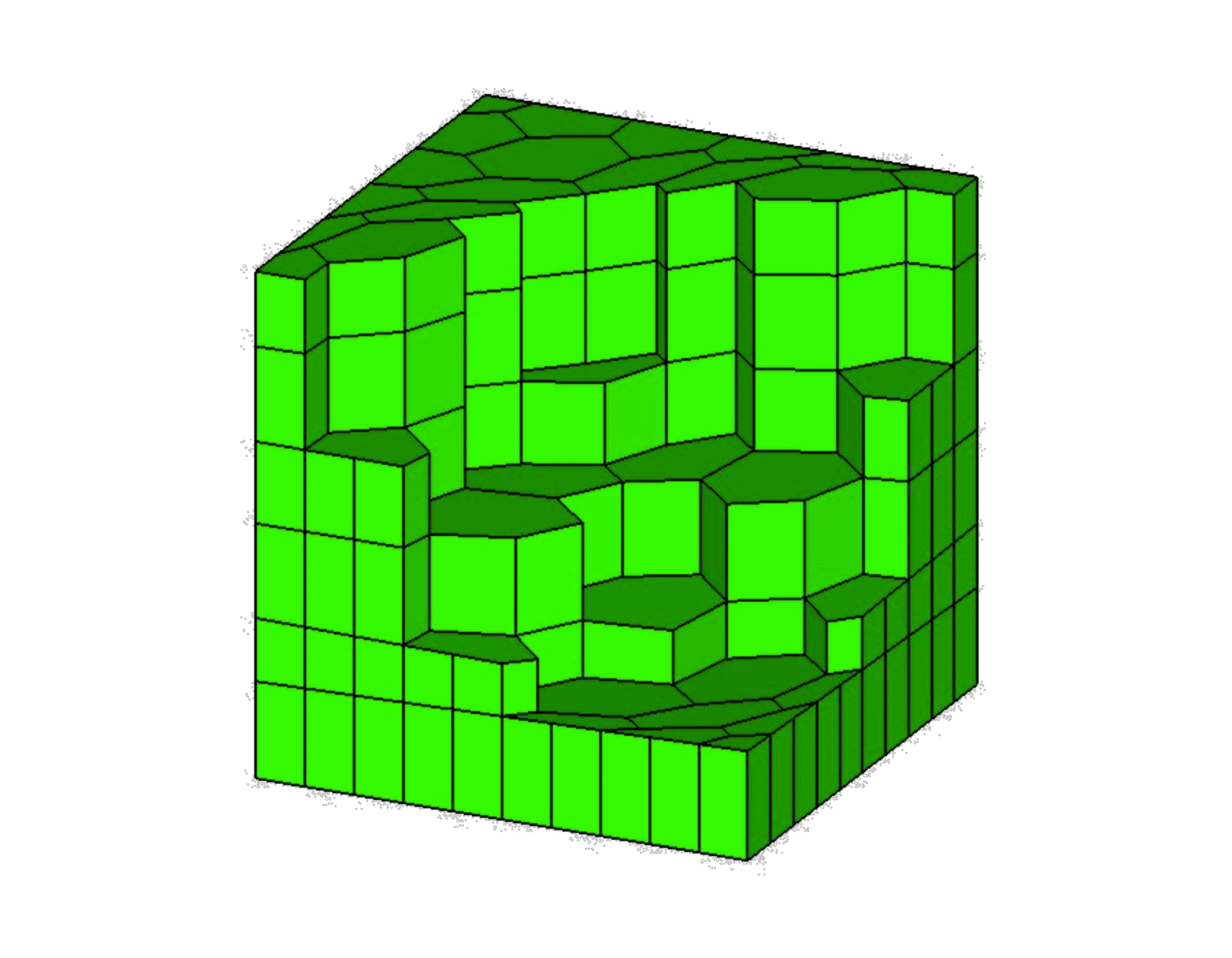}\hspace{-1cm}
      \includegraphics[scale=0.15]{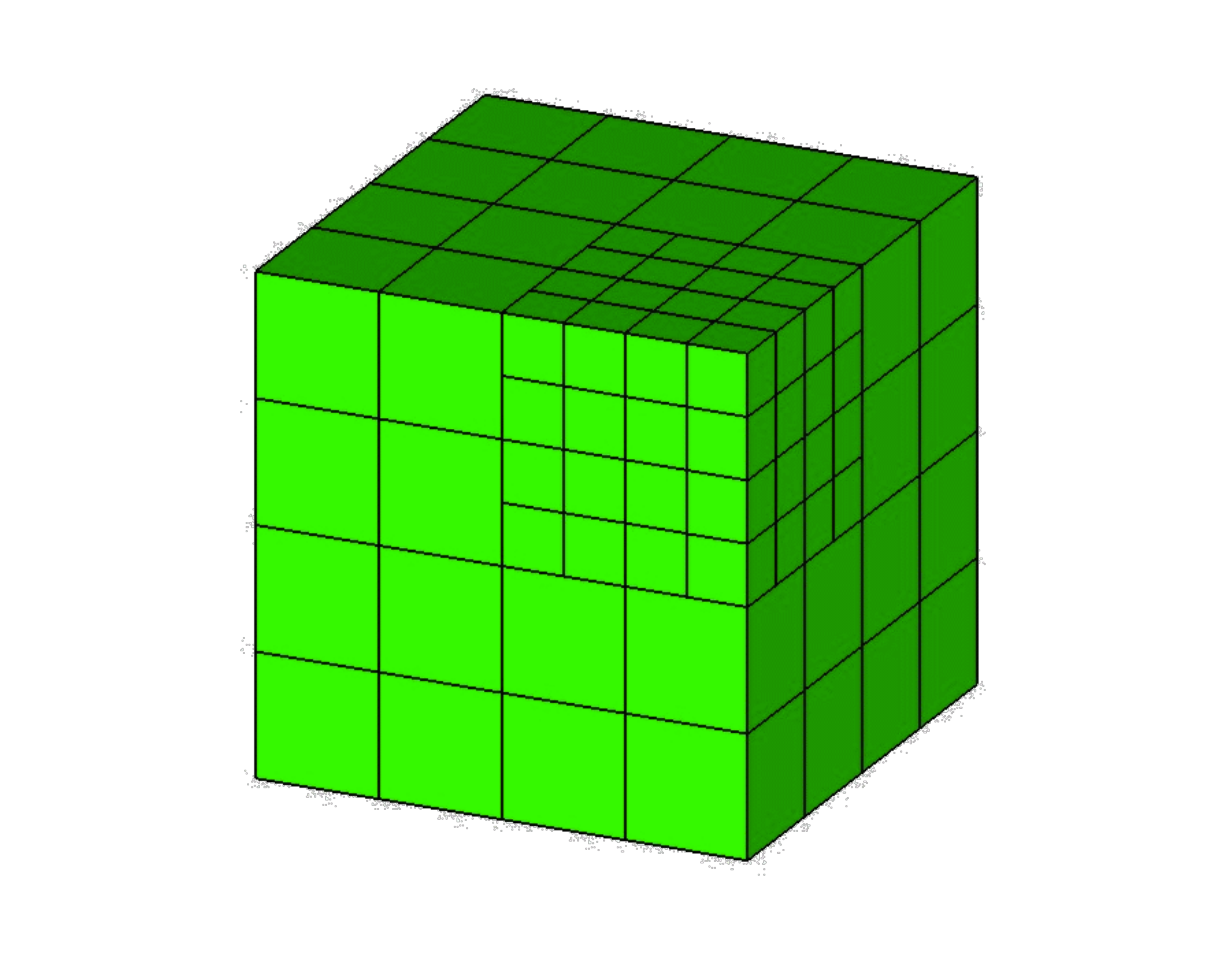}
    \end{tabular}
  \end{center}
  \vspace{-1cm}
  \caption{Random hexahedra; prysmatic meshes; local adaptive
    refinements (AMR, hanging nodes).}
  \label{fig:figure:mesh:3D}
\end{figure}

\SecItem{Formulations}

To introduce the basic ideas of the virtual element method, we first
consider the case of the Laplace equation.
Let $\Omega\subset\REAL^d$ ($d=2$) be a polygonal domain with boundary
$\Gamma$.
We use standard notation on Sobolev (and Hilbert) spaces, norms, and
semi-norms.
Hereafter, $\HONE_{\gs}(\Omega)$ denotes the affine Sobolev space of
functions in $\LTWO(\Omega)$ with first derivatives in
$\LTWO(\Omega)$, and boundary trace equal to $\gs$;
$\HONEzr(\Omega)$ is the subspace of $\HONE(\Omega)$
corresponding to $\gs=0$.
Also, we assume that $\fs\in\LTWO(\Omega)$ and
$\gD\in\HS{\frac{1}{2}}(\Gamma)$.

\NewItem
\PGRAPH{Differential formulation.}
\emph{Find $\us\in\HONE(\Omega)$ such that:}
\begin{align*}
  -\Delta\us     &= \fs\phantom{\gD~}\textrm{in~~}\Omega,\\[0.125em]
  \us            &= \gD\phantom{\fs~}\textrm{on~~}\Gamma.
\end{align*}

\NewItem
\PGRAPH{Variational formulation.}
\emph{Find $\us\in\HONE_{\gs}(\Omega)$ such that:}
\begin{align*}
  \bilA{\us}{\vs}:=\int_{\Omega}\GRAD\us\cdot\GRAD\vs\dV=\int_{\Omega}\fs\vs\dV=:\scal{\fs}{\vs}
  \qquad\forall\vs\in\HONEzr(\Omega).
\end{align*}

\NewItem
\PGRAPH{Finite element formulation.}
\emph{Find $\ush\in\Vhg\subset\HONE_{\gs}(\Omega)$ such that:}
\begin{align*}
  \bilA{\ush}{\vsh}:=\int_{\Omega}\GRAD\ush\cdot\GRAD\vsh\dV=\int_{\Omega}\fs\vsh\dV=:\scal{\fs}{\vsh}
  \quad\forall\vsh\in\Vhz\subset\HONEzr(\Omega),
\end{align*}
where $\Vhg$ and $\Vhz$ are finite dimensional affine and linear
functional spaces, dubbed \emph{the finite element spaces} (see
Section~4 for the formal definition).

\NewItem
\PGRAPH{A short guide to a happy life (with VEM)}

Let $\P$ be a cell of a mesh partitioning $\Th$ of domain $\Omega$.
The construction of the VEM follows these four logical steps.

\begin{itemize}
\item[$(i)$] Define the functions of the virtual element space
  $\Vh(\P)$ as the solution of a PDE problem;

\item[$(ii)$] Split the virtual element space as the direct sum of a
  the polynomial subspace $\PS{\NDG}(\P)$ of the polynomials of degree
  up to $\NDG$ and its complement:\\
  \begin{center}
    \begin{tabular}{rll}
      $\Vh(\P)$ = \big[\text{polynomials}\big] $\oplus$ \big[\text{non-polynomials}\big].
    \end{tabular}
  \end{center}
  At this point, it is natural to introduce the polynomial projection
  operator $\Pi_{\NDG}^{\nabla}:\Vh(\P)\to\PS{\NDG}(\P)$ and
  reformulate the splitting above as:
  \begin{align*}
    \Vh(\P) = \Pi^{*}_{\NDG}\big( \Vh(\P) \big) \oplus \big(1-\Pi^{*}_{\NDG}\big)\big( \Vh(\P) \big).
  \end{align*}
%   %%
%   $\Pi_{\NDG}^{*}$ can be projector $\Pi^{\nabla}_{\NDG}$ (elliptic)
%   or projector $\Pi^{0}_{\NDG}$ ($\LTWO$-orthogonal).
%   %%
%   In this draft we consider only the former case.

\item[$(iii)$] Define the virtual element approximation of the local
  bilinear form as the sum of two terms, respectively related to the
  consistency and the stability of the method:
  \begin{align*}
    \int_{\P}\nabla\ush\cdot\nabla\vsh\dV \approx 
    \text{\big[CONSISTENCY\big]} + \text{\big[STABILITY\big]}
    =: \bilAhP{\ush}{\vsh}.
  \end{align*}

\item[$(iv)$] Prove the convergence theorem:\\[1em]
  \centerline{
    \big[CONSISTENCY\big] + \big[STABILITY\big] $\Rightarrow$ \big[CONVERGENCE\big]
  }  
\end{itemize}

\SecItem{Extending the linear conforming FEM from triangles to
  polygonal cells}

\NewItem
\PGRAPH{The linear conforming Galerkin FEM on triangles.}
For every cell $\P$, we define the \textbf{local finite element
space} as:
\begin{align*}
  \VhP := \Big\{
  \vsh\in\HONE(\P) \,|\, \vsh\in\PS{1}(\P)
  \Big\}.
\end{align*}
Local spaces glue gracefully and yield the \textbf{global} finite
element space:
\begin{align*}
  \Vh := \big\{
  \vsh\in\HONE(\Omega) \,|\, \vsh{}_{|\P}\in\VhP\,\,\forall\P\in\Tmesh
  \big\}\subset\HONE(\Omega),
\end{align*}
which is a conforming approximation of $\HONE(\Omega)$.

\SubItem
We take into account the essential boundary conditions through the
affine space
\begin{align*} 
  \Vhg := \Big\{
  \vsh\in\VhP \,|\, \vsh{}_{|\Gamma}=\gs
  \Big\}
\end{align*}
for the given boundary function $\gs\in\HS{\frac{1}{2}}(\Gamma)$.
We take $\gs=0$ in the definition above and consider the linear
subspace $\Vhz\subset\Vh$.
The affine space $\Vhg$ and the linear space $\Vhz$ are respectively
used for the trial and the test functions in the variational
formulation.

\SubItem
The \textbf{degrees of freedom} of a virtual function $\vsh\in\Vh$ are
the values at mesh vertices.
Unisolvence can be proved and conformity of the global space is
obvious.

\NewItem
\PGRAPH{An extension of the FEM to polygonal cells.}
We extend the FEM to polygonal meshes by redefining the \emph{local
  finite element space} $\VhP$ on the polygonal cell $\P$.
To this end, we require that
    
\begin{itemize}  
\item the degrees of freedom of each function are its vertex values;
  therefore, the dimension of this functional space on cell $\P$ is
  equal to the number of vertices of the cell: $\text{dim}\VhP=\NPV$;
  %% ----------------------------------------------------------------
\item on triangles $\VhP$ must coincide with the linear Galerkin
  finite element space.
  This condition implies that $\VhP$ contains the linear polynomials
  $1$, $x$, $y$ and all their linear combinations;
  %% ----------------------------------------------------------------
\item the local spaces $\VhP$ \textit{glue gracefully} to give a
  \textbf{conformal} finite element space $\Vh$ of functions globally
  defined on $\Omega$.
\end{itemize}

\NewItem
\PGRAPH{Shape functions on polygons.}
The construction of a set of shape functions defined on $\P$ and
satisfying the requirements listed above is hard if we assume that
$\P$ may have a general polygonal geometric shape.
The challenge here is to define $\VhP$ by specifying the minimal
information about its functions so that a \emph{computable variational
  formulation} is possible.
To achieve this task, we specify the behavior of the functions of
$\VhP$ on $\partial\P$, for example by assuming that their trace on
each polygonal boundary is a piecewise polynomial of an assigned
degree.
Then, we characterize them inside $\P$ in a very indirect way by
assuming that they solve a partial differential equation.

\NewItem
\PGRAPH{The local finite element space on a generic polygonal cell.}
We define the local finite element space $\VhP$ as the span of a set
of shape functions $\varphi_i\in\HONE(P)$, which are associated with
the cell vertices $\V_i$.
A possible construction of these shape functions is given in the
following three steps.

\medskip
\begin{itemize}
\item[1.]  For each vertex $\V_i$ consider the boundary function
  $\delta_i$ such that:
  \begin{itemize}
  \item $\delta_i(\V_j)=1$ if $i=j$, and $0$ otherwise;
  \item $\delta_i$ is continuous on $\partial\P$ ;
  \item $\delta_i$ is linear on each edge.
  \end{itemize}
      
  \medskip
\item[2.] Then, set $\quad\restrict{\varphi_i}{\partial\P}=\delta_i$;
  
  \medskip
\item[3.] Finally, consider the function $\varphi_i$ that is 
  the \textbf{harmonic lifting} of the function $\delta_i$ inside $\P$.
\end{itemize}

\medskip
Eventually, we set: 
\begin{align*}
  \VhP:=\text{span}\{\varphi_1,\dots,\phi_{\NP}\}
\end{align*}

\NewItem
\PGRAPH{The harmonic lifting.}

We define the shape function $\varphi_i$ associated with vertex $i$ as
the harmonic function on $\P$ having $\delta_i$ as trace on boundary
$\partial\P$.
Formally, $\varphi_i\in\HONE(\P)$ is the solution of the following
elliptic problem:
\begin{equation*}
  \begin{aligned}
    -\Delta \varphi_i &= 0\phantom{\delta_i}  \quad\text{in~}\P, \\
    \varphi_i         &= \delta_i\phantom{0}  \quad\text{on~}\partial\P.
  \end{aligned}  
\end{equation*}

\SubItem
\PGRAPH{Properties of $\varphi_i$.}
Note that:
\begin{itemize}
\item the functions $\{\varphi_i\}$ are linearly independent (in the
  sense of the Gramian determinant);
\item if $\wsh\in\VhP$, then
  $\wsh=\sum_{i=1}^{\NP}\wsh(\V_i)\,\varphi_i$;
\item linear polynomials belong to $\VhP$, i.e., $1,x,y\in\VhP$;
\item the trace of each $\varphi_i$ on each edge of $\partial\P$ only
  depends on the vertex values of that edge.
  Consequently, the local spaces $\VhP$ glue together giving a
  conformal finite element space $\Vh\subset\HONEzr(\Omega)$.
\end{itemize}

\SubItem
Moreover, we can easily prove that:
\begin{itemize}
\item if $\P$ is a triangle, we recover the ${\mathbb P}_1$ Galerkin
  elements;
\item if $\P$ is a square, we recover the ${\mathbb Q}_1$ bilinear
  elements.
\end{itemize}

\NewItem
\PGRAPH{The low-order local virtual element space.} 
The \textbf{low-order} local virtual element space ($\NDG=1$) is given
by:
\begin{align*}
  \Vh^{1}(\P) = \Big\{ \vsh\in\HONE(\P)\,:\,
  & \Delta\vsh=0, \\
  & {\vsh}_{|\E}\in\PS{1}(\E)\quad\forall\E\in\partial\P,\\
  & {\vsh}_{|\partial\P}\in\CS{0}(\partial\P) \Big\}
\end{align*}

\SubItem
\PGRAPH{Extension of the conforming virtual space to the high-order case.}
The \textbf{high-order} local VE space ($\NDG\geq 2$) reads as:
\begin{align*}
  \Vh^{\NDG}(\P) = \Big\{ \vsh\in\HONE(\P)\,:\,
  & \Delta\vsh\in\PS{\NDG-2}(\P), \\
  & {\vsh}_{|\E}\in\PS{\NDG}(\E)\quad\forall\E\in\partial\P,\\
  & {\vsh}_{|\partial\P}\in\CS{0}(\partial\P) \Big\}
\end{align*}

\SubItem
The local spaces glue gracefully to provide a \emph{conforming}
approximation of $\HONE(\Omega)$.
A natural way to guarantee the last property in the previous
definitions, i.e., ${\vsh}_{|\partial\P}\in\CS{0}(\partial\P)$, is to
consider the \emph{vertex values} in the set of the degrees of
freedom.

\NewItem
\PGRAPH{The Harmonic Finite Element Method.}
Using the shape function of the previous construction, we can define a Polygonal 
Finite Element Method (PFEM), which we may call the \emph{Harmonic FEM}.
This method formally reads as: \emph{Find $\ush\in\Vh$ such that}
\begin{align*}
  \bilA{\ush}{\vsh} = \Fsh(\vsh) \quad\textrm{for~all~}\vsh\in\Vh,
\end{align*}
where (as usual)
\begin{align*}
  \bilA{\ush}{\vsh} = \int_{\Omega}\nabla\ush\cdot\nabla\vsh\dV,
\end{align*}
and $\Fsh(\vsh)$ is some given approximation of
$\displaystyle\int_{\Omega}\fs\vs\dV$, which we assume computable from
the given function $\fs$ by using only the vertex values of $\vsh$.
A possible approximation for the right-hand side is given in the
following step.

\SubItem
\PGRAPH{Low-order approximation of the right-hand side.}
The low-order approximation of the forcing term is given by:
\begin{align*}
  \scal{\fs}{\vsh} = 
  \sum_{\P}\int_{\P}\fs\vsh\dV \approx
  \sum_{\P}\overline{\vs}_{\hh.\P}\int_{\P}\fs\dV
  =: \Fsh(\vsh)
  \qquad\overline{\vs}_{\hh,\P}=\frac{1}{N_{\P}}\sum_{i=1}^{N_{\P}}\vsh(\X_i)
\end{align*}

\SubItem
In the literature, harmonic shape functions are used, for example, in
computer graphics (see, for example,~\cite{Martin:2008}) and
elasticity (see~\cite{Bishop:2014}).

\NewItem
\PGRAPH{Mesh assumptions.}
Under \emph{reasonable} assumptions on the mesh and the sequence of
meshes for $\hh\to 0$, the harmonic finite element discretization of
an elliptic problem enjoys the usual convergence properties.
We do not enter into the details here; among these properties we just
mention that
\begin{itemize}
\item all geometric objects must scale properly with the
  characteristic lenght of the mesh; for example, $\mP\simeq\hh^2$,
  $\mE\simeq\hh$, etc;
      
  \medskip
\item each polygon must be \emph{star-shaped} with respect to an
  internal ball of points $\Rightarrow$ \emph{interpolation estimates
    hold} (see~\cite{Brenner-Scott:2002,DiPietro-Ern:2011});
  more generally, each polygon can be the union of a \emph{uniformly
    bounded} number of star-shaped subcells.
\end{itemize}

\NewItem
\PGRAPH{Computability issues.}
The Harmonic FEM is a very nice method that works on polygonal meshes
and has a solid mathematical foundation.
%%and a priori error estimates can be derived.
%%
This method is expected to be second order accurate when it
approximates sufficiently smooth solutions.
In such a case, the approximation error is expected to scale down
proportionally to $\hh$ in the energy norm and proportionally to
$\hh^2$ in the $\LTWO$ norm.
However, we need to compute the shape functions to compute the
integrals forming the stiffness matrix
\begin{align*}
  \bilA{\varphi_i}{\varphi_j} = \int_\Omega\nabla\varphi_i\cdot\nabla\varphi_j\dV
\end{align*}
and the term on the right-hand side
\begin{align*}
  \Fsh(\vh) = \int_{\Omega}\fs\varphi_i\dV.
\end{align*}
A numerical approximation of the shape functions inside $\P$ is
possible for example by partitioning the cell in simplexes and then
applying a standard FEM.
However, this approach is rather expensive since it requires the
solution of a Laplacian problem on element $\P$ for each one of its
vertices.
%%
% Although this can be done once in a lifetime in a pre-processing phase
% for a given computational mesh, the cost may become excessive in the
% three-dimensional case.

\SecItem{The virtual approximation of the bilinear form
  $\bilA{\varphi_i}{\varphi_j}$}

The virtual element approach follows a different strategy by resorting
to the so-called \emph{virtual construction} of the bilinear form.
Roughly speaking, we assume that the bilinear form $\mathcal{A}$ of
the Harmonic FEM is approximated by a virtual bilinear form
$\mathcal{A}_{\hh}$.
Since the bilinear form $\mathcal{A}$ can be split into elemental
contributions $\mathcal{A}_{\P}$
\begin{align*}
  \bilA{\vsh}{\wsh} =
  \sum_{P}\bilAP{\restrict{\vs}{P}}{\restrict{\ws}{P}}=\sum_{P}\int_P\nabla\vs\cdot\nabla\ws\dV,
\end{align*}
we assume that $\mathcal{A}_{\hh}$ is split into elemental
contributions $\mathcal{A}_{\hh,\P}$.
Each $\mathcal{A}_{\hh,\P}$ is a local approximation of the
corresponding $\mathcal{A}_{\P}$.
Therefore, we have that:
\begin{align*}
  \bilAh{\vsh}{\wsh}=\sum_{P}\bilAhP{\restrict{\vsh}{P}}{\restrict{\wsh}{P}},
  \qquad\textrm{with~}\mathcal{A}_{\hh,\P}\approx\mathcal{A}_{\P}.
\end{align*}

\NewItem
Now, we give \emph{two} conditions that must be satisfied by each
local bilinear form: \textbf{consistency} and \textbf{stability}.
These two conditions are inherited from the MFD formulation and as for
this latter they guarantee the convergence of the method.

\NewItem
\PGRAPH{Consistency.}
Consistency is an exactness property for linear polynomials.
Formally, for all $\qs\in{\mathbb P}_1(\P)$ and for all $\vsh\in\VhP$:
\begin{align*}
  \bilAhP{\vsh}{\qs}=\bilA{\vsh}{\qs}.
\end{align*}

\NewItem
\PGRAPH{Stability.} 
There exist two positive constants $\alpha^*$ and $\alpha_*$
independent of $\P$, such that
\begin{align*}
  \alpha_*\bilAP{\vsh}{\vsh}\leq\bilAhP{\vsh}{\vsh}\leq\alpha^*\bilAP{\vsh}{\vsh}.
\end{align*}

\NewItem
\PGRAPH{Convergence Theorem.}

\setcounter{section}{1}
\begin{theorem}
  Assume that for each polygonal cell $\P$ the bilinear form
  $\mathcal{A}_{\hh,\P}$ satisfies the properties of
  \emph{consistency} and \emph{stability} introduced above.
  Let $\ush\in\Vh$ be such that
  \begin{align*}
    \bilAh{\ush}{\vsh}=\Fsh(\vsh) 
    \qquad\emph{for~all~}\vsh\in\Vh.
  \end{align*}
  Then, 
  \begin{align*}
    \boxed{\phantom{\Big(}\|\us-\ush\|_{\HONE(\Omega)} \le C\hh
      \|\us\|_{\HTWO(\Omega)} \phantom{\Big)}}
  \end{align*}
\end{theorem}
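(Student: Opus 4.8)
The plan is to run an abstract Strang-type argument, of the kind inherited from the convergence theory of the MFD method from which the two hypotheses originate. The only structural facts I need are that the global broken form is coercive and bounded on $\Vh$ with respect to the broken $\HONE$-seminorm $\snorm{\cdot}{1,\hh}^2:=\sum_{\P}\snorm{\cdot}{1,\P}^2$. Coercivity follows at once from the lower stability bound summed over the cells, using $\bilAP{\vsh}{\vsh}=\snorm{\vsh}{1,\P}^2$:
\begin{align*}
  \alpha_*\,\snorm{\vsh}{1,\hh}^2
  = \alpha_*\sum_{\P}\bilAP{\vsh}{\vsh}
  \le \sum_{\P}\bilAhP{\vsh}{\vsh}
  = \bilAh{\vsh}{\vsh},
\end{align*}
while boundedness comes from the upper bound combined with the Cauchy--Schwarz inequality in the symmetric positive form $\mathcal{A}_{\hh,\P}$, giving $\bilAhP{\wsh}{\vsh}\le\alpha^{*}\,\snorm{\wsh}{1,\P}\,\snorm{\vsh}{1,\P}$.

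Next I would introduce two companions of the exact solution: the virtual interpolant $\usI\in\Vh$ matching the vertex degrees of freedom of $\us$ (chosen so that $\us-\usI$ vanishes on $\Gamma$), and a piecewise-linear polynomial $\us_\pi$ taken cellwise as the best $\PS{1}(\P)$-approximation. Setting $\delta_\hh:=\ush-\usI$, which lies in the homogeneous subspace and is therefore an admissible test function, I would use coercivity, the discrete equation $\bilAh{\ush}{\delta_\hh}=\Fsh(\delta_\hh)$, then add and subtract $\us_\pi$ on each cell and apply \emph{consistency} to replace $\bilAhP{\us_\pi}{\delta_\hh}$ by $\bilAP{\us_\pi}{\delta_\hh}$, and finally add and subtract $\us$ and invoke the continuous weak form $\sum_{\P}\bilAP{\us}{\delta_\hh}=\scal{\fs}{\delta_\hh}$. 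This yields the error identity
\begin{align*}
  \alpha_*\,\snorm{\delta_\hh}{1,\hh}^2
  &\le \bilAh{\delta_\hh}{\delta_\hh}
     = \Fsh(\delta_\hh) - \bilAh{\usI}{\delta_\hh} \\
  &= \Fsh(\delta_\hh) - \scal{\fs}{\delta_\hh}
     - \sum_{\P}\bilAhP{\usI-\us_\pi}{\delta_\hh}
     - \sum_{\P}\bilAP{\us_\pi-\us}{\delta_\hh}.
\end{align*}

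The three groups of terms are then bounded separately. For the third group, plain Cauchy--Schwarz gives $\sum_{\P}\bilAP{\us_\pi-\us}{\delta_\hh}\le\snorm{\us-\us_\pi}{1,\hh}\snorm{\delta_\hh}{1,\hh}$; for the second, boundedness gives $\sum_{\P}\bilAhP{\usI-\us_\pi}{\delta_\hh}\le\alpha^{*}\snorm{\usI-\us_\pi}{1,\hh}\snorm{\delta_\hh}{1,\hh}$, and a triangle inequality splits $\snorm{\usI-\us_\pi}{1,\hh}\le\snorm{\us-\usI}{1,\hh}+\snorm{\us-\us_\pi}{1,\hh}$. The first group is the right-hand-side consistency error $\Fsh(\delta_\hh)-\scal{\fs}{\delta_\hh}$, which I would estimate cellwise by $\|\fs\|_{\LTWO(\P)}\|\delta_\hh-\overline{\vs}_{\hh,\P}\|_{\LTWO(\P)}$ and a scaled Poincaré inequality, producing a bound $\lec \hh\,\|\fs\|_{\LTWO(\Omega)}\snorm{\delta_\hh}{1,\hh}\lec \hh\,\|\us\|_{\HTWO(\Omega)}\snorm{\delta_\hh}{1,\hh}$. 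Dividing through by $\snorm{\delta_\hh}{1,\hh}$ and inserting the approximation estimates $\snorm{\us-\usI}{1,\hh}\lec \hh\,|\us|_{\HTWO(\Omega)}$ (virtual interpolation) and $\snorm{\us-\us_\pi}{1,\hh}\lec \hh\,|\us|_{\HTWO(\Omega)}$ (Bramble--Hilbert on star-shaped cells) gives $\snorm{\delta_\hh}{1,\hh}\lec \hh\,\|\us\|_{\HTWO(\Omega)}$. A final triangle inequality $\snorm{\us-\ush}{1,\hh}\le\snorm{\us-\usI}{1,\hh}+\snorm{\delta_\hh}{1,\hh}$, followed by a Poincaré--Friedrichs inequality applied to $\us-\ush\in\HONEzr(\Omega)$ to upgrade the seminorm to the full $\HONE(\Omega)$ norm, closes the argument.

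I expect the main obstacle to be the approximation estimates rather than the abstract skeleton, which is essentially forced once consistency and stability are granted. The most delicate of the three is the \emph{virtual interpolation} bound $\snorm{\us-\usI}{1,\hh}\lec \hh\,|\us|_{\HTWO}$: since $\usI$ is defined only through its degrees of freedom and the virtual functions cannot be evaluated pointwise, classical FEM interpolation theory does not apply directly, and one must instead rely on scaled trace and Poincaré inequalities valid uniformly over the polygonal cells. This is precisely where the mesh-regularity assumptions (each $\P$ star-shaped with respect to a ball, with $\mP\simeq\hh^2$, $\mE\simeq\hh$) become indispensable, and the uniformity of the hidden constants across the family of meshes as $\hh\to 0$ is the point requiring the most care. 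The right-hand-side consistency estimate is secondary but also hinges on the same scaled Poincaré inequality for the cell averaging operator $\overline{\vs}_{\hh,\P}$.
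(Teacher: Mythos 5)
Your proposal is correct and is essentially the argument the paper points to: the paper itself offers no proof beyond citing the founding VEM paper, and the proof there is exactly this Strang-type chain (coercivity from stability, insertion of the interpolant $\usI$ and the cellwise polynomial $\us_\pi$, consistency to pass from $\mathcal{A}_{\hh,\P}$ to $\mathcal{A}_{\P}$, the continuous equation, and the three-term estimate with the load consistency error). Your closing remarks correctly identify the virtual interpolation estimate under the star-shapedness assumptions as the only genuinely technical ingredient.
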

\begin{proof}
  See~\cite{BeiraodaVeiga-Brezzi-Cangiani-Manzini-Marini-Russo:2013}.
\end{proof}

\NewItem
\PGRAPH{ A good starting point to build $\mathcal{A}_{\hh,\P}$\ldots leading to a rank-deficient stiffness matrix. } 

% How can we define the local bilinear form $\bilAhP{\cdot}{\cdot}$ with
% the properties of consistency and stability?

\SubItem
We know the functions $\vsh$ of $\VhP$ only on the boundary of $\P$
and we can compute the \emph{exact value} of the quantity
\begin{align*}
  \overline{\nabla\vsh} := \dfrac{1}{|\P|}\int_{\P}\nabla\vsh\dV
\end{align*}
using only the vertex values.
In fact,
\begin{align*}
  \int_{\P}\nabla\vsh\dV
  = \int_{\partial P}\vsh\norP\dS
  = \sum_{i=1}^{\NP} \left(\int_{\E_i}\vsh\dS\right)\nor_{\P,i}
  = \sum_{i=1}^{\NP}\dfrac{\vsh(\V_i)+\vsh(\V_{i+1})}{2}|\E_i|\nor_{\P,i}
\end{align*}
$\overline{\nabla\vsh}$ is a constant vector in $\REAL^2$. 

\SubItem
Now, we are really tempted to say that
\begin{align*}
  \bilAP{\varphi_i}{\varphi_j} :=
  \int_{\P}\nabla\varphi_i\cdot\nabla\varphi_j\dV
  \approx
  \int_{\P}\overline{\nabla\varphi_i}\cdot\overline{\nabla\varphi_j}\dV
  =:\bilAhP{\varphi_i}{\varphi_j}
\end{align*}
Note that if $\P$ is a triangle we obtain the stiffness matrix of the
linear Galerkin FEM.
However, $\bilAhP{\varphi_i}{\varphi_j}$ would have \emph{rank $2$}
for any kind of polygons, thus leading to a \emph{rank deficient}
approximation for $\mathcal{A}_{\hh}$ on any mesh that is not
(strictly) triangular!

\renewcommand{\Pj}{\Pi^{\nabla}_{\NDG}}

\NewItem
\PGRAPH{The local projection operator $\Pj$.}
We define the \emph{local projection operator} for each polygonal cell
$\P$
\begin{align*}
  \Pj\,:\,\VhP \longrightarrow \PS{1}(\P)
\end{align*}
that has the two following properties:
\begin{itemize}
\item[$(i)$] it approximates the gradients using only the vertex values:
\begin{align*}
  \nabla\left(\Pj\vsh\right) = \overline{\nabla\vsh};
\end{align*}
\vspace{-0.5\baselineskip}
\item[$(ii)$] it preserves the linear polynomials:
\begin{align*}
  \Pj\qs = \qs\quad\text{for~all~}\qs\in\PS{1}(\P).
\end{align*} 
\end{itemize}

\NewItem
\PGRAPH{The bilinear form $\mathcal{A}_{\hh,\P}$.}
We start writing that 
\begin{align*}
  \bilAhP{\ush}{\vsh}=\bilAhP{\Pj\ush}{\vsh}+\bilAhP{\ush-\Pj\ush}{\vsh}.
\end{align*}
With an easy computation it can be shown that
\begin{align*}
  \bilAhP{\Pj\ush}{\vsh} = \bilAP{\Pj\ush}{\Pj\vsh} 
  := \mathcal{A}_{\hh,\P}^{0}\big(\ush,\vsh\big)
\end{align*}
and
\begin{align*}
  \bilAhP{(I-\Pj)\ush}{\vsh} = \bilAP{(I-\Pj)\ush}{(I-\Pj)\vsh}
  \longrightarrow\mathcal{A}_{\hh,\P}^{1}\big(\ush,\vsh\big).
\end{align*}
We will set:     
\begin{align*}
  \boxed{ 
    \phantom{\Big(}
    \mathcal{A}_{\hh,\P} = \mathcal{A}_{\hh,\P}^{0}+\mathcal{A}_{\hh,\P}^{1} =
    \text{CONSISTENCY} + \text{STABILITY}
    \phantom{\Big)}
  }
\end{align*}

\NewItem
\PGRAPH{The consistency term $\mathcal{A}_{\hh,\P}^{0}$.}
%%
% Recall that:
% $\quad\nabla\Pj\vsh=\overline{\nabla\vsh}\quad\forall\vsh\in\VhP\quad$
% and $\quad\Pj\qs=\qs\quad\forall\qs\in\PS{1}(\P)$.
%%
The bilinear form $\mathcal{A}_{\hh,\P}^{0}$ provides a sort of
\emph{``constant gradient approximation''} of the stiffness matrix.
In fact, $\mathcal{A}_{\hh,\P}^{0}$ ensures the \emph{consistency
  condition: $\bilAhP{\vsh}{\qs}=\bilAP{\vsh}{\qs}$} for all
$\qs\in\PS{1}(\P)$; in fact,
\begin{align*}
  \mathcal{A}_{\hh,\P}^{0}\big(\vsh,\qs\big) & = 
  \int_{\P}\overline{\nabla\vsh}\cdot\overline{\nabla\qs}\dV =
  \mP\overline{\nabla\vsh}\cdot\overline{\nabla\qs} =
  \left(\int_{\P}\nabla\vsh\right)\cdot\overline{\nabla\qs}\dV
  \\[0.5em] &
  =
  \int_{\P}\nabla\vsh\cdot\overline{\nabla\qs}\dV =
  \int_{\P}\nabla\vsh\cdot\nabla\qs\dV =
  \bilAP{\vsh}{\qs}.
\end{align*}

\SubItem
The remaining term is zero because $(I-\Pj)\qs=0$ if
$\qs\in\PS{1}(\P)$.

\NewItem
\PGRAPH{The stability term $\mathcal{A}_{\hh,\P}^{1}$.}

We need to correct $\mathcal{A}_{\hh,\P}^{0}$ in such a way that:
\begin{itemize}
\item consistency is not upset;
  \vspace{-0.5\baselineskip}
\item the resulting bilinear form is stable;
  \vspace{-0.5\baselineskip}
\item the correction is computable using only the degrees of freedom!
\end{itemize}

In the founding paper it is shown that we can substitute the (non
computable) term $\bilAP{(I-\Pj)\ush}{(I-\Pj)\vsh}$ with
\begin{align*}
  \mathcal{A}_{\hh,\P}^{1}\big(\ush,\vsh\big) := \bilSP{ (I-\Pj)\ush }{ (I-\Pj)\vsh }
\end{align*}    
where $\mathcal{S}_{\hh,\P}$ can be \textbf{any symmetric and positive
  definite bilinear form} that behaves (asymptotically) like
$\mathcal{A}_{\P}$ on the kernel of $\Pj$.

Hence:
\begin{align*}
  \bilAhP{\ush}{\vsh} := 
  \underbrace{ \boxed{ \phantom{\Big(} \bilAP{\Pj\ush}{\Pj\vsh} \phantom{\Big)} } }_{\text{CONSISTENCY} }
  \quad+\quad
  \underbrace{ \boxed{ \phantom{\Big(} \bilSP{ (I-\Pj)\ush }{ (I-\Pj)\vsh } \phantom{\Big)} } }_{\text{STABILITY}}
\end{align*}
%% \hspace{3.2cm}CONSISTENCY\hspace{2cm}STABILITY

\SecItem{Implementation of the local virtual bilinear form in four
  steps}

\NewItem
The central idea of the VEM is that we can compute the polyomial
projections of the virtual functions and their gradients exactly using
only their degrees of freedom.
So, to implement the VEM we introduce the matrix representation of
such projection, i.e., the projection matrix.
The projection matrix representats the projection of the shape
functions with respect to a given monomial basis (or with respect to
the same basis of shape functions).

\NewItem
\PGRAPH{Step~1.}
To compute the projection matrix we need two special matrices called
$\matB$ and $\matD$.
Matrices $\matB$ and $\matD$ are constructed from the basis of the
polynomial subspace (the scaled monomials).
All other matrices are derived from straightforward calculations
involving $\matB$ and $\matD$.
The procedure is exactly the same for the conforming and nonconforming
method, except for the definition of matrix $\matB$.

\SubItem
\PGRAPH{Matrix $\matD$.}
To start the implementation of the VEM, we consider the matrices
$\matB$ and $\matD$.
The columns of matrix $\matD$ are the degrees of freedom of the scaled
monomials.
Since the scaled monomials are the same for both the conforming and
nonconforming case, the definition of matrix $\matD$ is also the same.
%% 
%For $\NDG=1$, this matrix is given by the formula:
Matrix $\matD$ is given by:
\begin{align*}
  \matD =        
  \left[
    \begin{array}{ccc}
      1 & \dfrac{x_1-\xP}{\hP} & \dfrac{y_1-\yP}{\hP}\\[1em]
      1 & \dfrac{x_2-\xP}{\hP} & \dfrac{y_2-\yP}{\hP}\\[1em]
      {}& \ldots\\[1em]
      1 & \dfrac{x_n-\xP}{\hP} & \dfrac{y_n-\yP}{\hP}
    \end{array}
  \right].
\end{align*}

\SubItem
\PGRAPH{Matrix $\matB$.}
The column of matrix $\matB$ are the right-hand sides of the
projection problem.
%% 
% When $\NDG=1$, matrix $\matB$ for the conforming VEM is given by the
% formula:
%% 
Matrix $\matB$ is given by:
\begin{align*}
  % \begin{tiny}
  \matB =
  \frac{1}{2\hP}
  \left[
    \begin{array}{cccc}
      \displaystyle\frac{1}{n}          & \displaystyle\frac{1}{n}          & \ldots &  \displaystyle\frac{1}{n} \\[1em]
      \abs{\E_1}n_{x,1}+\abs{\E_n}n_{x,n} & \abs{\E_2}n_{x,2}+\abs{\E_1}n_{x,1} & \ldots & \abs{\E_n}n_{x,n}+\abs{\E_{n-1}}n_{x,n-1} \\[1em]
      \abs{\E_1}n_{y,1}+\abs{\E_n}n_{y,n} & \abs{\E_2}n_{y,2}+\abs{\E_2}n_{y,1} & \ldots & \abs{\E_n}n_{y,n}+\abs{\E_{n-1}}n_{y,n-1}
    \end{array}
  \right].
  % \end{tiny}
\end{align*}
%%while for the nonconforming VEM it is given by the formula:
%% 
%\begin{align*}
  %\matB =
  % \frac{1}{2\hP}
  % \left[
  %   \begin{array}{cccc}
  %     \displaystyle\frac{1}{n}          & \displaystyle\frac{1}{n}          & \ldots &  \displaystyle\frac{1}{n} \\[1em]
  %     \abs{\E_1}n_{x,1}+\abs{\E_n}n_{x,n} & \abs{\E_2}n_{x,2}+\abs{\E_1}n_{x,1} & \ldots & \abs{\E_n}n_{x,n}+\abs{\E_{n-1}}n_{x,n-1} \\[1em]
  %     \abs{\E_1}n_{y,1}+\abs{\E_n}n_{y,n} & \abs{\E_2}n_{y,2}+\abs{\E_2}n_{y,1} & \ldots & \abs{\E_n}n_{y,n}+\abs{\E_{n-1}}n_{y,n-1}
  %   \end{array}
  % \right].
%\end{align*}

\NewItem
\PGRAPH{Step~2.}
Using these matrices, we compute matrix $\matG=\matB\matD$.
%% 
%% When $\NDG=1$, for the conforming case matrix $\matG$ is given by:
%Matrix $\matD$ is given by:
%\begin{align*}
%  \matG =
%  \frac{\mP}{\hP^2}
%  \left[
%    \begin{array}{ccc}
%      \displaystyle\frac{\hP}{2\mP} & 0 & 0\\
%      0                             & 1 & 0\\
%      0                             & 0 & 1
%    \end{array}
%  \right].
%\end{align*}
%while for the nonconforming VEM it is given by the formula:
%\begin{align*}
  %\matG =
  % \frac{\mP}{\hP^2}
  % \left[
  %   \begin{array}{ccc}
  %     \displaystyle\frac{\hP}{2\mP} & 0 & 0\\
  %     0                             & 1 & 0\\
  %     0                             & 0 & 1
  %   \end{array}
  % \right].
%\end{align*}

\NewItem
\PGRAPH{Step~3.}
Then, we solve the \textbf{projection problem} in algebraic form:
$\matG\matPnk=\matB$ for matrix $\matPnk$, which is the representation
of the projection operator $\Pnk$.

\NewItem
\PGRAPH{Step~4.}
Using matrices $\matPnk$ and $\matPn=\matD\matPnk$ we build the \textbf{stiffness
  matrix}:
\begin{align*}
  \matM = (\matPnk)^T\matGt\matPnk + \nu_{\P}(\matI-\matPn)^T\,(\matI-\matPn)
\end{align*}
where $\matGt$ is matrix $\matG$ with the first row set to zero,
$\nu_{\P}$ is a scaling coefficient.

%% references
\SecItem{Background material on numerical methods for PDEs using polygonal and polyhedral meshes}

\NewItem
The \textbf{conforming VEM} for the Poisson equation in primal form as
presented in the \emph{founding
  paper}~\cite{BeiraodaVeiga-Brezzi-Cangiani-Manzini-Marini-Russo:2013}
is a \emph{variational reformulation} of the nodal Mimetic Finite
Difference method of References~\cite{Brezzi-Buffa-Lipnikov:2009}
(low-order case) and~\cite{BeiraodaVeiga-Lipnikov-Manzini:2011}
(arbitrary order case).

\SubItem
The \textbf{nonconforming VEM} for the Poisson equation in primal form
as presented in the \emph{founding
  paper}~\cite{AyusodeDios-Lipnikov-Manzini:2016} is the variational
reformulation of the arbitrary-order accurate nodal MFD method of
Reference~\cite{Lipnikov-Manzini:2014}.

\SubItem
Incremental extensions are for different type of applications
(including parabolic problems) and formulations (mixed form of the
Poisson problem), hp refinement, and also different variants
(arbitrary continuity, discontinuous variants).
A complete and detailed review of the literature on MFD and VEM is out
of the scope of this collection of notes.
However, we list below a few references that can be of interest to a
reader willing to know more.

\SubItem
%% general mimetic discretizations
\textbf{Compatible discretization methods} have a long story. 
A review can be found in the conference paper~\cite{Bochev-Hyman:2006}.
A recent overview is also found in the first chapter of the Springer
book~\cite{BeiraodaVeiga-Lipnikov-Manzini:2014} on the MFD method for
elliptic problem and the recent paper~\cite{Palha-Gerritsma:2016}.

\SubItem
The state of the art on these topics is well represented in two recent
\textbf{special issues} on numerical methods for PDEs on unstructured
meshes~\cite{Bellomo-Brezzi-Manzini:2014,BeiraodaVeiga-Ern:2016}.

\SubItem
Comparisons of different methods are found in the \textbf{conference
  benchmarks} on 2D and 3D diffusion problms with anisotropic
coefficients,
see~\cite{Eymard-Henri-Herbin-Hubert-Klofkorn-Manzini:2011:FVCA6:benchmark:proc-peer}.

\SubItem
For the \textbf{Mimetic Finite Difference method}, we refer the interested
reader to the following works:

\begin{itemize}

\item extension to polyhedral cells with curved faces~\cite{Brezzi-Lipnikov-Shashkov-Simoncini:2007};~%

\item general presentation of the MFD method:%
  book~\cite{BeiraodaVeiga-Lipnikov-Manzini:2014}; %
  review papers~\cite{Gyrya-Lipnikov-Manzini-Svyatskiy:2014,Lipnikov-Manzini-Shashkov:2014};~%
  connection of MFD with other methods~\cite{Lipnikov-Manzini:2016:Durham:chbook};~%
  benchmarks~\cite{Lipnikov-Manzini:2011:FVCA6:benchmark:proc-peer,Manzini:2008:FVCA5:benchmark:proc-peer};~%
  conference paper~\cite{Manzini:2008:FVCA5:review:proc-peer},
  
\item advection-diffusion problems:
  connection with finite volume schemes~\cite{Beirao-Droniou-Manzini:2011};~%
  convergence analysis~\cite{Cangiani-Manzini-Russo:2009};~%

\item diffusion equation in primal form: %%
  the low-order formulation~\cite{Brezzi-Buffa-Lipnikov:2009},%
  post-processing of solution and flux~\cite{BeiraodaVeiga-Manzini-Putti:2015};~%
  arbitrary-order of accuracy on polygonal~\cite{BeiraodaVeiga-Lipnikov-Manzini:2011,BeiraodaVeiga-Lipnikov-Manzini:2011:FVCA6:regular-paper:proc-peer} % 
  and polyhedral meshes~\cite{Lipnikov-Manzini:2014};~%
  arbitrary regularity~\cite{BeiraodaVeiga-Manzini:2012:WCCM12:proc,BeiraodaVeiga:ECCOMAS12:proc};~%
  a posteriori estimators~\cite{Antonietti-BeiraodaVeiga-ovadina-Verani:2013};
  
\item diffusion equation in mixed form: %%
  founding paper on the low-order formulation:
  convergence analysis~\cite{Brezzi-Lipnikov-Shashkov:2006} and 
  implementation~\cite{Brezzi-Lipnikov-Simoncini:2005};~%
  with staggered coefficients~\cite{Manzini-Lipnikov-Moulton-Shashkov:2016:SINUM:submitted,Lipnikov-Manzini-Moulton-Shashkov:2016};~%
  arbitrary order of accuracy~\cite{Gyrya-Lipnikov-Manzini:2016};~%
  monotonicity conditions and discrete maximum/minimum principles~\cite{Lipnikov-Manzini-Svyatskiy:2011,Lipnikov-Manzini-Svyatskiy:2011:FVCA6:regular-paper:proc-peer};~%
  eigenvalue calculation~\cite{Cangiani-Gardini-Manzini:2011};~%
  second-order flux discretization~\cite{BeiraodaVeiga-Manzini:2008a};~%
  convergence analysis~\cite{BeiraodaVeiga-Lipnikov-Manzini:2009};~%
  post-processing of flux~\cite{Cangiani-Manzini:2008};~%
  a posteriori analysis and grid adaptivity~\cite{BeiraodaVeiga-Manzini:2008b};~%
  flows in fractured porous media~\cite{Antonietti-Formaggia-Scotti-Verani-Verzott:2013b};

\item steady Stokes equations:
  the MFD method on polygonal meshes~\cite{BeiraodaVeiga-Gyrya-Lipnikov-Manzini:2009};~%
  error analysis~\cite{BeiraodaVeiga-Lipnikov-Manzini:2010};~%
  
\item differential forms and Maxwell eigenvalues~\cite{Brezzi-Buffa-Manzini:2014};~%
  magnetostatics~\cite{Lipnikov-Manzini-Brezzi-Buffa:2011};~%

\item quasilinear elliptic problems~\cite{Antonietti-Bigoni-Verani:2015}, 
  nonlinear and control problems~\cite{Antonietti-BeiraodaVeiga-Bigoni-Verani:2013,Antonietti-Bigoni-Verani:2013a},
  obstacle elliptic problem~\cite{Antonietti-BeiraodaVeiga-Verani:2013b},

\item topology optimization~\cite{Gain:2013}.~%

\end{itemize}

\SubItem
For the \textbf{Virtual Element Method}, we refer the interested
reader to the following works:

\begin{itemize}
\item first (founding) paper on the conforming VEM~\cite{BeiraodaVeiga-Brezzi-Cangiani-Manzini-Marini-Russo:2013};~%

\item diffusion problems in primal form:
  ``hp'' refinements ~\cite{BeiraodaVeiga-Chernov-Mascotto-Russo:2016}

\item diffusion problems in mixed form~\cite{Brezzi-Falk-Marini:2014,BeiraodaVeiga-Brezzi-Marini-Russo:2016c}

\item geomechanics~\cite{Andersen-Nilsen-Raynaud:2016}

\item $L^2$-projections~\cite{Ahmad-Alsaedi-Brezzi-Marini-Russo:2013}

\item steady Stokes equations: %
  stream function formulation~\cite{Antonietti-BeiraodaVeiga-Mora-Verani:2014};~%
  divergence free methods~\cite{BeiraodaVeiga-Lovadina-Vacca:2016};~%
  mixed VEM for the pseudostressvelocity formulation~\cite{Caceres-Gatica:2016}
  
\item fracture network
  simulations~\cite{Fumagalli-Keilegavlen:2016,Benedetto-Berrone-Scialo:2016,Benedetto-Berrone-Pieraccini-Scialo:2014,Benedetto-Berrone-Borio-Pieraccini-Scialo:2016a}

\item Cahn-Hilliard equation on polygonal meshes~\cite{Antonietti-BeiraodaVeiga-Scacchi-Verani:2016};~%
  
\item linear~\cite{BeiraodaVeiga-Brezzi-Marini:2013} and 
  nonlinear elasticity~\cite{BeiraodaVeiga-Lovadina-Mora:2015};~%
  plate bending problem~\cite{Brezzi-Marini:2013};~%
  finite deformations~\cite{Chi-BeiraodaVeiga-Paulino};~%
  three-dimensional elasticity~\cite{Gain-Talischi-Paulino:2014};

\item non-conforming formulation:%
  first (founding) paper~\cite{AyusodeDios-Lipnikov-Manzini:2016};~%
  Stokes~\cite{Cangiani-Gyrya-Manzini:2016};~%
  advection-reaction-equation~\cite{Cangiani-Manzini-Sutton:2016};~%
  biharmonic problems~\cite{Zhao-Chen-Zhang:2016,Antonietti-Manzini-Verani:2016:M3AS:submitted};~%

\item eigenvalue calculation~\cite{Gardini-Vacca:2016};~%
  Steklov eigenvalue problem~\cite{Mora-Rivera-Rodriguez:2016};~%

\item connection with other methods:
  PFEM~\cite{Manzini-Russo-Sukumar:2014,Natarajan-Bordas-Ooi:2015};
  MFD and BEM-based FEM~\cite{Cangiani-Gyrya-Manzini-Sutton:2017:GBC:chbook};

\item Helmholtz equation~\cite{Perugia-Pietra-Russo:2016};
  
\item implementation~\cite{BeiraodaVeiga-Brezzi-Marini-Russo:2014,Sutton:2016};

\item further developments of the conforming virtual
  formulation~\cite{BeiraodaVeiga-Brezzi-Marini-Russo:2016a,BeiraodaVeiga-Brezzi-Marini-Russo:2016d};~%
  stabilization of the hourglass phenomenon~\cite{Cangiani-Manzini-Russo-Sukumar:2015};~%
  VEM with with arbitrary regularity~\cite{BeiraodaVeiga-Manzini:2014,BeiraodaVeiga-Manzini:2012:WCCM12:proc};~%

\item advection-diffusion problems in the advection dominated
  regime~\cite{Benedetto-Berrone-Borio-Pieraccini-Scialo:2016b};

\item a posteriori analysis~\cite{BeiraodaVeiga-Manzini:2015};

\item topology optimization~\cite{Gain-Paulino-Leonardo-Menezes:2015};

\item contact problems~\cite{Wriggers-Rust-Reddy:2016}.

\end{itemize}

\NewItem
%% (rewrite)
\PGRAPH{Polygonal/Polyhedral FEM.}
The development of numerical methods with such kind of flexibility or
independence of the mesh has been one of the major topics of research
in the field of numerical PDEs in the last two decades and a number of
schemes are currently available from the literature.
These schemes are often based on approaches that are substantially
different from MFDs or VEMs.

Recently developed discretization frameworks related to general meshes
include
\begin{itemize}
\item the finite element method using rational basis
  functions~\cite{Wachspress:1975,Wachspress:2010,Floater-Gillette-Sukumar:2014} and using
  generalized polynomial interpolants on polygons
  ~\cite{Sukumar-Tabarraei:2004,Sukumar-Malsch:2006,Tabarraei-Sukumar:2007}; %%
\item the finite volume methods (see the review
  paper~\cite{Droniou:2014}) and the connection with the MFD method
  (see~\cite{Droniou-Eymard-Gallouet-Herbin:2010}); %%
\item hybrid high-order (HHO) method~\cite{DiPietro-Ern:2014a,DiPietro-Ern:2014b,DiPietro-Ern:2015}; %%
\item the discontinuous Galerkin (DG) method~\cite{DiPietro-Ern:2011}; %%
\item hybridized discontinuous Galerkin (HDG) method~\cite{Cockburn-Gopalakrishnan-Lazarov:2009}; %%
\item the weak Galerkin (wG) method \cite{Wang-Ye:2014}. %%
\end{itemize}

\SubItem
Other examples (extended FEM, partition of unity, meshless, non-local
decomposition, etc) can be found in:
\cite{Arroyo-Ortiz:2007,%                     [4], mesh-free (Lect. Notes Comput. Sci. Eng.)
Babuska-Banerjee-Osborn:2003,%                [5], survey mesh-less, generalized FEM, (Acta Numer.)
Babuska-Banerjee-Osborn:2004,%                [6], generalized FEM, Int. J. Comput. Mehods
Babuska-Melenk:1997,%                         [7], partition of unity (IJNMPDE)
Babuska-Osborn:1983,%                         [8], generalized FEM, theory(?), SINUM
Bishop:2014,%                                 [20], harmonic FEM (IJNMPDE)
Bochev-Hyman:2006,%                           [21], mimetic discretizations (IMA Conf. Procs)
%Cockburn-Gopalakrishnan-Lazarov:2009,%       [31], HDG (SINUM)
DiPietro-Ern:2012,%                           [34], DG Springer book 
%Droniou-Eymard-Gallouet-Herbin:2010,%        [38], unified MFD and mixed & hybrid FVM (M3AS)
Duarte-Babuska-Oden:2000,%                    [41], generalized FEM, 3D, structural mechanics (Comput. & Structures)
%Floater-Gillette-Sukumar:2014,%              [42], gradients bound of Wachspress shape functions (SINUM)
Floater-Hormann-Kos:2006,%                    [43], barycentric coordinates for convex polygons (Adv. Comput. Mech.)
Fries-Belytschko:2010,%                       [44], overview extended/generalized FEM, (IJNME)
Gerstenberger-Wall:2008,%                     [47], extended FEM, fluid-structure interaction, (CMAME)
Idelsohn-Onate-Calvo-DelPin:2003,%            [48], meshless FEM (founding paper?)
Mohammadi:2008,%                              [52], extended finite element method, book
Rabczuk-Bordas-Zi:2010,%                      [57], 3D crack modelling, PUM, (Comput. & Structures)
Rand-Gillette-Bajaj:2013,%                    [58], interpolation estimates, mean value coordinate  (Adv. Comput. Mech.)
%Sukumar-Malsch:2006,%                        [60], PFEM interpolants, (Arch. Comput. Methods. Engrg.)
Sukumar-Mos-Moran-Belytschko:2000,%           [61], 3D crack modelling, extended FEM, IJNME
%Sukumar-Tabarraei:2004,%                     [62], Conforming PFEM
%Tabarraei-Sukumar:2007,%                     [63], extended FEM on polygonal/quadtree meshes
Talischi-Paulino-Pereira-Menezes:2010,%       [65], topology optimization, PFEM
%Wachspress:1975,%                            [66], rational FE basis, book
%Wachspress:2010,%                            [67], rational basis, convex polygons (Comput. Math. Appl.)
Warren:1996,%                                 [70], barycentric coordinates
Delzanno:2015,Manzini-Delzanno-Vencels-Markidis:2016,Camporeale-Delzanno-Lapenta-Daughton:2006,Vencels-Delzanno-Johnson-BoPeng-Laure-Markidis:2015,Camporeale-Delzanno-Bergen-Moulton:2015% six papers on Vlasov-Poisson (using spectral methods for Poisson)
}

\SubItem
Most of these methods use trial and test functions of a rather
complicate nature, that often could be computed (and integrated) only
in some approximate way.
In more recent times several other methods have been introduced in
which the trial and test functions are pairs of polynomial (instead of
a single non-polynomial function) or the degrees are defined on
multiple overlapping meshes:
Examples are found in:
\cite{Bonelle-Ern:2014,%                     [22], compatible discrete operators, polyhedral meshes, M2AN
Cockburn:2010,%                              [30], HDG, Int. Conference Mathematicians
Cockburn-Gopalakrishnan-Lazarov:2009,%       [31], HDG (SINUM)
Cockburn-Guzman-Wang:2009,%                  [32], elliptic problems, DG, superconvergence (Math. Comp.)
%DiPietro-Ern:2015,%                          [33], linear elasticity, locking free, HHO (CMAME)
%DiPietro-Ern:2014b,%                         [35], arbitrary-order, compact-stencil, local reconstruction (Comput. Methods Appl. Math. 
%DiPietro-Ern:2014a,%                         [37], HHO, variable diffusion, general meshes, (C.R.Acad.Fr.)
Droniou-Eymard-Gallouet-Herbin:2013,%        [39], gradient schemes
Mu-Wang-Wang-Ye:2013,%                       [54], wG, 2nd-order elliptic problems, (Numer. Algorithms)
Mu-Wang-Wei-Ye-Zhao:2013,%                   [55], wG, interface
Wang-Ye:2013,%                               [68], wG, primal, 2nd-order elliptic problems, (J. Comp. Appl. Math.)
Wang-Ye:2013,%                               [69], wG, mixed, 2nd-order elliptic problems, (J. Comp. Appl. Math.)
Coudiere-Manzini:2010,%                      DDFV, advection-diffusion
Domelevo-Omnes:2005,%                        Laplace 
Droniou-Eymard:2006,%                        mixed finite volume
Hermeline:2000,%                             Hermeline:2003
Hermeline:2003,%                             CMAME
Hermeline:2007,%
Krell-Manzini:2012,%
Cockburn-DiPietro-Ern:2016%
}

%% next
\SecItem{Next versions.}
The next incremental developments of this document will cover:
\begin{itemize}
\item the high-order VEM formulation;
\vspace{-0.5\baselineskip}
\item the nonconforming formulation;
\vspace{-0.5\baselineskip}
\item connection with the MFD method;
\vspace{-0.5\baselineskip}
\item 3D extensions (enhancement of the virtual element space, etc).
\end{itemize}

%% Acknowledgements
\SecItem{Acknowledgements}
This manuscript is based on material that was disclosured 
in the last years under these status numbers: LA-UR-12-22744,
LA-UR-12-24336, LA-UR-12-25979, LA-UR-13-21197, LA-UR-14-24798,
LA-UR-14-27620.
This manuscript is assigned the disclosure number LA-UR-16-29660.
The development of the VEM at Los Alamos National Laboratory has been
partially supported by the Laboratory Directed Research and
Development program (LDRD), U.S. Department of Energy Office of
Science, Office of Fusion Energy Sciences, under the auspices of the
National Nuclear Security Administration of the U.S. Department of
Energy by Los Alamos National Laboratory, operated by Los Alamos
National Security LLC under contract DE-AC52-06NA25396.

\clearpage
\bibliographystyle{plain}
\bibliography{VEM,MFD,poly,pub-MFD,pub-VEM,delzanno}

\end{document}